\theoremstyle{plain}
\newtheorem{thm}{Theorem}[section]
\newtheorem{theorem}{Theorem}[section]
\newtheorem*{thm*}{Theorem}
\newtheorem{prop}[thm]{Proposition}
\newtheorem*{prop*}{Proposition}
\newtheorem*{conj*}{Conjecture}
\newtheorem{cor}[thm]{Corollary}
\newtheorem{lem}[thm]{Lemma}
\newtheorem*{lem*}{Lemma}
\renewcommand{\i}{\mathrm{i}}
\renewcommand{\Re}{\operatorname{Re}}
\renewcommand{\Im}{\operatorname{Im}}
\title{Temperleyan Domino Tilings with Holes}
\author{Matthew Nicoletti \thanks{University of California, Berkeley, Department of Statistics\\
 E-mail: mnicoletti@berkeley.edu}}
\date{}
\begin{document}

\maketitle

\begin{abstract}
    We analyze asymptotic height function fluctuations in uniformly random domino tiling models on multiply connected 
    Temperleyan domains. Starting from asymptotic formulas derived by Kenyon \cite{Ken99}, we show that (1) the difference of the centered height function and a harmonic function with boundary values given by the (random) centered hole heights converges in the sense of moments to a Gaussian free field, which is independent of the hole heights, and (2) the hole heights themselves converge in distribution to a discrete Gaussian random vector. These results confirm general predictions about height fluctuations for tilings on multiply connected domains.
\end{abstract}


\section{Introduction}



\subsection{Overview}
The dimer model, the study of random perfect matchings on bipartite graphs, or equivalently of random tilings of domains in the plane is a well studied model in statistical mechanics which exhibits a wide array of universal behaviors. Using \emph{Thurston's height function}~\cite{Thu90}, conformal invariance of the scaling limit of the model has been established in many settings, providing rigorous proofs of general physical predictions. We refer to the surveys~\cite{Gor21,Ken04,Ken09} for more general history and background.

In this note, we study uniformly random domino tilings of \emph{Temperleyan domains}, analyzed by Kenyon \cite{Ken99}. These are rectilinear regions approximating a fixed (possibly multiply connected) region~$U \subset \mathbb{C}$. As explained in that work, the name comes from a bijection of Temperley~\cite{Tem81} generalized in~\cite{KPW00}. In~\cite{Ken99}, the moments of the height function are shown to have a conformally invariant limit. The proof of this result is constructive; an expression for joint moments as an iterated contour integral  is computed (by analyzing the inverse Kasteleyn using discrete complex analysis techniques), and the contour integral is shown to be invariant under conformal isomorphisms. For general~$U$, the integrand in the formula for a joint height moment is not explicit, though if~$U$ is simply connected, it is computed exactly. In the seminal follow up work~\cite{Ken01}, for simply connected~$U$, the integral formulas are identified with the moments of a Gaussian free field.

Our main result is the characterization of the scaling limit of the height fluctuations in the Temperleyan setting for multiply connected domains~$U$ with piecewise smooth boundary. Asymptotic joint moment formulas of~\cite{Ken99} are our starting point, and our approach is to use theta functions on an associated Riemann surface~$R$ (the double of~$U$) to analyze those expressions. In particular, Theorems~\ref{thm:intro_main1} and~\ref{thm:intro_main2} below identify the (limiting) moments with those of the independent sum of a Gaussian free field on~$U$ and a harmonic function whose boundary values on each of the inner boundary components of~$U$ is a random constant; moreover, these constants are jointly distributed as a centered discrete Gaussian distribution. The result confirms general predictions for multiply connected tiling models given in~\cite[Conjecture 24.2]{Gor21}.

The discrete Gaussian arises because boundary heights are not fixed along inner boundaries. This may be compared to tiling models where heights along all boundaries are fixed; in such a setting, convergence to a Gaussian free field without any additional discrete component has been shown~\cite{BG19}. Since local height differences are deterministic along boundary components, mean-subtracted hole-boundary height values are well defined independently of the choices of representative boundary lattice sites. In our setting, mean-subtracted hole-boundary heights (up to a factor of~$\frac{1}{4}$ due to the height function convention) converge in distribution to a multivariate discrete Gaussian distribution, whose components are the boundary values of the random harmonic function described above.

Our computation involves the identification of the \emph{shift parameter} in the discrete Gaussian distribution (denoted as~$e$ in Theorem~\ref{thm:intro_main2}) for the class of domains we consider; a general formula for this parameter remains unknown even at the level of heuristics, see the discussion surrounding the conjectures in Section 24.2 of~\cite{Gor21}, where this parameter is called~$m$. In both the present work and in~\cite{BN25} (discussed more below), the shift in the discrete Gaussian comes naturally from a \emph{standard divisor} of a theta function on an associated compact Riemann surface. However, in contrast to that work and other works involving discrete Gaussians (discussed below), here the shift does not evolve quasi-periodically as a function of the lattice scale parameter~$\epsilon$; here the corresponding standard divisor is fixed and simply consists of the collection~$d_1,\dots, d_g$ of marked points on inner boundaries of~$U$.


The essential new insight is Lemma~\ref{lem:maincomp}, which uses theta functions on an associated compact Riemann surface to ``explicitly'' compute the integrand in the integral formulas derived in the work of Kenyon. This computation appears to be new, even though the moment formulas of~\cite{Ken99} have been known for many years. The other essential inputs are the arguments of~\cite[Section 4]{BN25}; that work derives integral formulas for joint height moments of the same exact form as in Corollary~\ref{cor:moments2} here, and from them the Gaussian free field and discrete Gaussian components are extracted in a general way.

 Our results provide another indication (in addition to the various results involving discrete Gaussians discussed below) that discrete Gaussians are universal in multiply connected 2D statistical mechanics models. Moreover, we conjecture that for a very large class of ``higher genus'' dimer models, even the integral formula for height moments, of the form given in Corollary~\ref{cor:moments2} (compare also with~\cite[Lemma 4.4]{BN25}, which leads to moment formulas with the same structure), is universal. Indeed, the genus zero version of the formula, which amounts to taking~$\omega_0(z,z') = \frac{dz}{z-z'}$ there, appears to be universal in simply connected and genus zero models, as has been confirmed in many large classes of examples, such as~\cite{Ken01,Ken08,BF14,Dui13,Pet15}. It may be particularly interesting to note that the result we obtain here, for domains with holes, matches the results obtained in~\cite{BN25}, which analyzes the Aztec diamond setup with gaseous facets emerging in the bulk. The only differences are the metric underlying the Gaussian free field and the parameters of the discrete Gaussian distributions.

 There are many proofs of Gaussian fluctuations in \emph{simply connected} tiling models. Convergence with more general ``flat'' boundary conditions using discrete complex analysis was obtained in~\cite{Rus18,Rus20}; moreover, much more general discrete complex analysis techniques for dimer models were developed in~\cite{CLR1,CLR2}, and were further analyzed in special cases in~\cite{BNR23, BNR24}. Many works prove convergence in other setups using a variety of tools, including~\cite{Ken08,BF14,Dui13,Pet15,BK17,BG18,BL18,Hua20,GH22}. For non simply connected models, aside from the works~\cite{BG19} and~\cite{BN25} discussed above, there are fewer results. An exact calculation of the asymptotic distribution of the number of nontrivial loops of a double dimer model on a cylinder appears in \cite{Ken14}, and there it is also shown that the double-dimer loops in multiply connected domains are conformally invariant. A tiling model on a cylinder was studied in~\cite{ARV21}, and a result exactly analogous to our Theorems~\ref{thm:intro_main1} and~\ref{thm:intro_main2} below is obtained; this work provided the first computation of fluctuations in a non simply connected setup. One difference in that setup is that the discrete Gaussian distribution is present in the model from the outset, and moreover the approach (cleverly, using a different underlying integrable structure) bypasses a direct analysis of the correlation kernel, which appears to be the only possible route in the present setup.

Our results may be further compared to a variety of other results involving discrete Gaussians in the literature on random point processes, and in particular on random matrix models and dimer models on surfaces. In the context of random point processes related to random matrix ensembles, theta functions appeared in the asymptotic expansions of certain large deviations events for the sine kernel process~\cite{DIZ97}. Theta functions and discrete Gaussians appear in the physics papers~\cite{BDE00},~\cite{Eyn09}, as well as in the mathematical works~\cite{Shc13,BG24}; these works all analyze \emph{$\beta$ ensembles} (which generalize random Hermitian matrix models) in the multi-cut regime. Additional examples of discrete Gaussians describing asymptotic behaviors in statistical mechanics models include~\cite{ACC22, ACCL24,Cha24} which analyze certain 2D Coulomb gas models in multiply connected regimes. See also references within these works.

Dimer models on various discretizations of a torus were studied in~\cite{BT06},~\cite{Dub15},~\cite{DG15},~\cite{KSW16}. The works~\cite{BT06} and \cite{KSW16} show that a discrete Gaussian describes the random monodromies of the multivalued height function on the torus;~\cite{Dub15} and ~\cite{DG15} obtain a decomposition of the height fluctuations as a Gaussian free field plus an independent discrete Gaussian times a harmonic function. As they explain, this object is also known as the \emph{compactified free field}. 

There has been recent work studying dimer models on higher genus surfaces as well. The sequence of works \cite{BLR24,BLR25} analyze dimer models on certain Temperleyan graphs embedded in Riemann surfaces (with arbitrary genus and possibly boundary components). To make the graphs have perfect matchings, they remove a certain number of white vertices from the approximating graphs, and in the limit these removed white vertices converge to marked points on the surface. Under certain natural assumptions on the sequence of graphs, those works prove convergence to a universal limit, invariant under conformal transformations of the surface with marked points; they do not characterize the limit, though they conjecture that it is a compactified free field. By further developing and applying the machinery of \emph{t-embeddings}, together with the technique of computing a family of perturbed Kasteleyn determinants in order to access observables,~\cite{Bas24} identifies this limit in a collection of cases which includes all isomorphism classes of limiting Riemann surfaces with marked points. Modulo the verification of a technical condition (which is expected to be true and will be verified in future work) required for the universality theorems of \cite{BLR24,BLR25} to be applicable, this identification completes the picture and proves convergence to a compactified free field for a very large family of dimer models on surfaces.

In particular, after the completion of a first version of this work, the author learned that the setup of~\cite{Bas24} contains multiply connected planar domains (the subject of this work) as a special case. However, aside from the technical condition mentioned above, there is one other reason that our results do not follow directly from the combination of~\cite{Bas24} and \cite{BLR24, BLR25}: In those works, to balance the number of black and white vertices a certain number of \emph{interior} white vertices are removed from the graph, whereas in our setting (following~\cite{Ken99}) we add certain \emph{boundary} black vertices to make the domain tileable. Thus, roughly speaking, our setup should correspond to a limiting case of theirs where marked points merge in pairs at the boundary components. Moreover, our methods are quite different, as we proceed by directly analyzing moments via the inverse of the (unperturbed) Kasteleyn matrix, and we thereby make a connection to an analogous result for the Aztec diamond (via~\cite{BN25}, as discussed above).

\subsection{Results}

Consider a checkerboard coloring of unit lattice squares tiling~$\mathbb{R}^2$, with each square centered at a point of~$\mathbb{Z}^2$ and the square centered at~$(0,0)$ colored white. Let~$W_0$, resp.~$W_1$, be the set of unit squares with both coordinates even, resp. odd. Let~$B_0$, resp. $B_1$, denote the set of unit squares with coordinates equal to~$(1,0)$ mod $2$, resp. $(0,1)$ mod $2$.

An \emph{even polyomino} is a union of lattice squares bounded by simple closed lattice paths, such that all corner squares (at convex or concave corners) are of type~$B_1$. A \emph{Temperleyan  polyomino} is an even polyomino with a black square~$\tilde d_0$ on the outer boundary removed, and with one black square~$\tilde d_j$ added along along each inner boundary component.  A \emph{domino tiling} of a Temperleyan polyomino is a tiling of it by $2 \times 1$ rectangles consisting of pairs of adjacent lattice squares. A Temperleyan polyomino on~$\epsilon \mathbb{Z}^2$ is a Temperleyan polyomino rescaled by~$\epsilon$, so the corresponding rescaled dominoes are~$2 \epsilon \times \epsilon$ rectangles. We will study the uniform measure on domino tilings of Temperleyan polyominos on~$\epsilon \mathbb{Z}^2$ approximating a fixed domain~$U$.

Suppose~$U$ is a connected domain with~$g+1$ piecewise smooth boundary components~$A_0,\dots,A_g$ and~$g+1$ marked points~$d_j$,~$j=0,\dots,g$, one along each boundary component. We assume, as in~\cite{Ken99}, that tangents along the boundary have one sided limits at corners. Let~$P_\epsilon$ be a Temperleyan polyomino on $\epsilon \mathbb{Z}^2$ approximating~$U$ in the following sense. The boundary components of~$P_\epsilon$ are within~$O(\epsilon)$ of those of~$U$, and away from corners of~$\partial U$, the tangent vector of $\partial U$ points in the same half space as the tangent at nearby points of the polyomino. Moreover, the removed vertex and exposed vertices~$\tilde d_j$ of~$P_\epsilon$ are within~$O(\epsilon)$ of~$d_j$,~$j=0,\dots,g$. Suppose in addition that in a~$\delta$ neighborhood of each~$\tilde d_j$ the boundary of~$P_\epsilon$ is flat (vertical or horizontal), where~$\delta = \delta(\epsilon)$ tends to zero sufficiently slowly (as required in the proof of~\cite[Theorem 13]{Ken99}).

The \emph{height function} of a domino tiling of a polyomino on~$\epsilon \mathbb{Z}^2$ is the function on vertices of the polyomino defined by declaring an outer boundary vertex~$v$ to have~$h(v) = 0$ together with the following local rules: For~$v$ and~$v + \epsilon$ adjacent lattice points of the polyomino such that the directed edge~$(v, v+\epsilon)$ has a white square on its left,~$h(v + \epsilon) = h(v) + 3$ if the directed edge crosses a domino, and otherwise~$h(v + \epsilon) = h(v) - 1$.

Our characterization of the height function will involve the \emph{Green's function} on~$U$, via the moments of the \emph{Gaussian free field} (see~\cite{She07} for a detailed definition and exposition), which appear in the right hand side of~\eqref{eqn:joint_moment_in} below. Let~$g_U(z,z')$ denote the Green's function on~$U$. The Green's function can be characterized as the unique function which for fixed~$z'$ is harmonic in~$z$ for~$z \neq z'$, and with the property that~$g_U(z,z')- (-\frac{1}{2\pi} \log|z-z'|) $ is smooth and harmonic for~$z$ in a neighborhood of~$z'$. It satisfies the symmetry property~$g_U(z,z') = g_U(z',z)$. In addition, let~$f_j: U \rightarrow \mathbb{R}$,~$j=1,\dots,g$ be the unique harmonic function satisfying
\begin{equation}\label{eqn:fjdef}
    f_j|_{\partial U}(z) = \begin{cases}
        1, & z \in A_j \\
        0, & z \in U \setminus A_j.
    \end{cases}
\end{equation}

Define~$h = h_\epsilon$ as the random height function corresponding to a uniformly random domino tiling of~$P_\epsilon$. Let~$Z_j = h(d_j^{(\epsilon)}) - \mathbb{E}[h(d_j^{(\epsilon)})]$ for boundary lattice points~$d_j^{(\epsilon)}$ along the boundary component of~$P_\epsilon$ corresponding to the component of~$\partial U$ containing~$d_j$. By~\cite[Proposition 20]{Ken99}, joint moments of $(Z_1,\dots,Z_g)$ converge as~$\epsilon \rightarrow 0$. For any lattice point~$v$ of~$P_{\epsilon}$, define
\begin{equation}\label{eqn:tildeh}
    \tilde{h}(v) \coloneqq h(v)-\mathbb{E}[h(v)] - \sum_{j=1}^g Z_j f_j(v).
\end{equation}
Our first theorem is the following.

\begin{theorem}\label{thm:intro_main1}
    Fix pairwise distinct points~$z_1,\dots, z_K \in U$. For each~$\epsilon$ choose lattice points~$z_1^{(\epsilon)},\dots, z_K^{(\epsilon)}$ of~$P_\epsilon$, such that~$z_i^{(\epsilon)}$ is within~$O(\epsilon) $ of~$z_i$. The joint moments of~$\tilde{h}$ converges to those of the Gaussian free field in~$U$: 
\begin{equation}\label{eqn:joint_moment_in}
     \lim_{\epsilon \rightarrow 0} \mathbb{E}\left[  \prod_{j=1}^K \tilde h(z_j^{(\epsilon)})\right] 
     =\begin{cases} 
     \frac{4^K}{\pi^{\frac{K}{2}}} \sum_{\pi = \{\{i,j\} \}} \prod_{\{i,j\} \in \pi} g_{U} (z_i, z_j), 
     & K \text{ even}\\
    0, &  K \text{ odd}.
    \end{cases}
    \end{equation}
The summation in~\eqref{eqn:joint_moment_in} is over all \emph{pairings}, or partitions of $\{1,\dots, K\}$ into subsets of size~$2$. Moreover, for any integers~$n_1, n_2,\dots, n_g \geq 0$,
\begin{equation}
    \lim_{\epsilon \rightarrow 0} \mathbb{E}\left[ Z_1^{n_1} \cdots Z_g^{n_g}  \prod_{j=1}^K \tilde h(z_j^{(\epsilon)}) \right] =   \lim_{\epsilon \rightarrow 0} \mathbb{E}\left[ Z_1^{n_1} \cdots Z_g^{n_g} \right] \lim_{\epsilon \rightarrow 0}   \mathbb{E}\left[\prod_{j=1}^K \tilde h(z_j^{(\epsilon)}) \right].
\end{equation}
In other words,~$\tilde{h}$ and~$(Z_1,\dots,Z_g)$ are asymptotically independent (in the sense of moments) as~$\epsilon \rightarrow 0$.
    
\end{theorem}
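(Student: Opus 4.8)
The plan is to begin from Kenyon's iterated contour integral formulas for the joint moments of the height function, insert the explicit form of the integrand supplied by Lemma~\ref{lem:maincomp}, and thereby reduce the theorem to the general extraction argument of~\cite[Section 4]{BN25}. Concretely, I would first recall from~\cite[Theorem 13 and Proposition 20]{Ken99} that after centering, the joint moments $\mathbb{E}\left[\prod_{j}\bigl(h(z_j^{(\epsilon)}) - \mathbb{E} h(z_j^{(\epsilon)})\bigr)\right]$ converge as $\epsilon \to 0$ to a sum over pairings of the insertions, each pair $\{i,j\}$ contributing a double contour integral of a two-point kernel built from the limiting inverse Kasteleyn operator. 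For a general multiply connected $U$ this kernel is not explicit in~\cite{Ken99}; the decisive step is to apply Lemma~\ref{lem:maincomp}, which identifies the integrand with an explicit expression in terms of theta functions and abelian differentials on the double $R$ of $U$. After this substitution the joint moment formula takes exactly the form of Corollary~\ref{cor:moments2}, i.e.\ the same structure as~\cite[Lemma 4.4]{BN25}, with a two-point kernel $\omega_0(z,z')$ that degenerates to the genus-zero kernel $\frac{dz}{z-z'}$ away from the handles.

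With the moment formula in this form, I would invoke the general argument of~\cite[Section 4]{BN25}. That argument splits $\omega_0$ into a ``genus zero'' piece, whose double integral reproduces the Green's function $g_U(z,z')$ and hence a Gaussian free field, plus correction terms supported on the homology of $R$ and organized by the marked points $d_1,\dots,d_g$, which produce both the harmonic field $\sum_j Z_j f_j$ and the limiting discrete Gaussian law of $(Z_1,\dots,Z_g)$. The outcome is that the centered height function converges, in the sense of joint moments, to
\begin{equation}
    \Phi + \sum_{j=1}^g Z_j\, f_j,
\end{equation}
where $\Phi$ is a Gaussian free field on $U$ with covariance a constant multiple of $g_U$, independent of the limiting vector $(Z_1,\dots,Z_g)$. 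The harmonic functions $f_j$ enter precisely because the non-genus-zero part of $\omega_0$, integrated against an interior insertion, reproduces exactly the harmonic extension~\eqref{eqn:fjdef} of the boundary data carried by the hole heights.

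Both assertions then follow. By the definition~\eqref{eqn:tildeh}, the subtraction of $\sum_j Z_j f_j$ removes the harmonic correction exactly, so $\tilde h$ converges in moments to $\Phi$; its joint moments converge to the Wick sum over pairings of $g_U$, odd moments vanish since $\Phi$ is centered Gaussian, and the prefactor $\frac{4^K}{\pi^{K/2}} = (16/\pi)^{K/2}$ is the normalization dictated by the height-function convention (the increments $+3,-1$, giving the factor $4$ per insertion) together with the $\pi$ from the relation between the inverse Kasteleyn asymptotics and the Green's function. The factorization of the mixed moments in the second display is inherited directly from the independence of $\Phi$ and $(Z_1,\dots,Z_g)$ in the decomposition above.

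The hard part will be the explicit computation underlying Lemma~\ref{lem:maincomp}, and in particular verifying that the non-genus-zero part of $\omega_0$ reproduces exactly the functions $f_j$ and the hole-height covariances, so that subtracting $\sum_j Z_j f_j$ annihilates the harmonic (monodromy) component and leaves a field orthogonal to, and asymptotically independent of, the $Z_j$. Once this two-point decomposition is established, the combinatorial propagation of the cancellation to all higher mixed moments is routine, as in~\cite[Section 4]{BN25}, but it rests entirely on the precise theta-function identities of Lemma~\ref{lem:maincomp}.
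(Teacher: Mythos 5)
Your overall route --- Kenyon's moment formulas, the theta-function identification of the integrand via Lemma~\ref{lem:maincomp} and Corollary~\ref{cor:moments2}, then the extraction machinery of~\cite[Section 4]{BN25} --- is the paper's route. But there is a genuine error at your starting point: you assert that \cite[Proposition 20]{Ken99} gives the limiting centered moments as ``a sum over pairings of the insertions, each pair contributing a double contour integral of a two-point kernel.'' It does not. The formula (Proposition~\ref{prop:moments}) is a $K$-fold iterated contour integral of a full $K\times K$ determinant $\det(F_{\varepsilon_i,\varepsilon_j})$, summed over sign assignments $\varepsilon_i\in\{\pm\}$; expanding the determinant produces all permutations, not only fixed-point-free involutions. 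The Wick/pairing structure in~\eqref{eqn:joint_moment_in} is the \emph{conclusion} of the theorem, not an input: on a multiply connected domain the centered field $h-\mathbb{E}h$ is not asymptotically Gaussian (that is precisely why the discrete Gaussian component exists), and the pairing sum emerges only for $\tilde h$, after the subtraction of $\sum_j Z_j f_j$. Taking a pairing formula as the point of departure therefore begs the question.

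What your sketch omits is the mechanism by which the paper actually derives the Wick rule and the independence: passage to joint cumulants. Via the moment--cumulant relation~\eqref{eqn:cumsum}, the determinantal formula of Corollary~\ref{cor:moments2} becomes the cumulant formula~\eqref{eqn:cumeq}, in which the determinant is replaced by a sum over single $K$-cycles; multilinearity of cumulants converts the subtraction of $\sum_j Z_j f_j$ into the modified contours $\int_{\bar z_i}^{z_i}-\sum_j f_j(z_i)\int_{B_j}$. The $K=2$ cumulant is identified with $\frac{16}{\pi} g_U$ by harmonicity, Dirichlet boundary values, and the diagonal singularity of $\omega_0$; for $K\geq 3$ the cycle-sum integrand is shown to have \emph{no} pole on the diagonal (the $z_1\leftrightarrow z_2$ symmetry rules out a simple pole) and to vanish as any $z_i\to\partial U$, so all higher cumulants vanish --- this argument, not a decomposition of $\omega_0$ into a ``genus zero piece plus homology corrections'' (which is not how~\cite{BN25} proceeds either), is what yields the pairing sum, and, with $B$-cycle integrations inserted for the copies of $Z_i$, the asymptotic independence. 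Your closing paragraph also misplaces the difficulty: Lemma~\ref{lem:maincomp} is available to you as stated, whereas the ``combinatorial propagation'' you dismiss as routine is exactly the cumulant analysis that carries the proof.
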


In addition, we have the following theorem explicitly characterizing the limit in distribution of~$(Z_1,\dots,Z_g)$. With~$f_1,\dots,f_g$ as above, define
\begin{equation}\label{eqn:taudef}
    \tau_{i j} \coloneqq \frac{1}{2} \int_{U} \nabla f_i \cdot \nabla f_j dx dy
\end{equation}
for $i,j=1,\dots,g$; this is a symmetric and positive definite~$g \times g$ matrix. As described in Section~\ref{subsec:doubleU}, the double of~$U$, obtained by gluing~$U$ to itself along its boundary, is a compact Riemann surface. In fact it is a special type of surface called an~$M$ curve~\cite{BCT22}. We also need
\begin{equation}\label{eqn:edefint}
e \coloneqq -\sum_{j=1}^g \int_{d_0}^{d_j} \vec{\omega} + \Delta,
\end{equation}
 where~$\vec{\omega} = (\omega_1,\dots,\omega_g)$ are the holomorphic one forms on the double of~$U$, and~$\Delta$ is the \emph{vector of Riemann constants}. The integration paths are taken to remain inside~$U$. By properties of~$M$ curves,~$e \in \mathbb{R}^g$. See Section~\ref{subsec:doubleU} for definitions and slightly more discussion.

\begin{theorem}\label{thm:intro_main2}
    Let~$(X_1,\dots,X_g)$ have the distribution supported on~$\mathbb{Z}^g$ and given by
\begin{equation}\label{eqn:disc_gauss}
     \mathbb{P}(X = n) = \frac{1}{C} \exp(- \pi (n-e) \cdot \tau (n-e)) \qquad n \in \mathbb{Z}^g
    \end{equation}
where~$C $ is a normalization constant, and~$\tau$ and~$e$ are defined in~\eqref{eqn:taudef} and~\eqref{eqn:edefint}, respectively.
Then we have the convergence in distribution as~$\epsilon \rightarrow 0$,
\begin{equation}\label{eqn:disc_gauss_conv_intro}
    (\frac{1}{4}Z_1,\dots,\frac{1}{4} Z_g) \stackrel{d}{\rightarrow}   (X_1-\mathbb{E}[X_1],\dots,X_g -\mathbb{E}[X_g]).
\end{equation}
    
\end{theorem}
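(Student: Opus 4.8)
The plan is to read off the limiting law of $(Z_1,\dots,Z_g)$ from the contour-integral moment formulas of Corollary~\ref{cor:moments2}, specialized to pure hole-height moments (taking $K=0$), and to match the result with the discrete Gaussian of~\eqref{eqn:disc_gauss}. The convergence of these moments is already guaranteed (by Theorem~\ref{thm:intro_main1}, or directly by~\cite[Proposition~20]{Ken99}), so the task is to identify the limit. First I would write $\lim_{\epsilon\to0}\mathbb{E}[Z_1^{n_1}\cdots Z_g^{n_g}]$ as an iterated integral of the kernel $\omega_0(z,z')$ supplied by Lemma~\ref{lem:maincomp}, with each variable integrated around the inner boundary cycle $A_j$. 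Since $\omega_0$ is expressed through theta functions on the double $R$, the periods of $\omega_0$ around these cycles are controlled by the quasi-periodicity of the Riemann theta function, and computing the relevant $b$-periods reduces the iterated integrals to linear data attached to $R$: its period matrix and the Abel--Jacobi images of the marked points.

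Next I would organize these moments into a generating function. Following the extraction argument of~\cite[Section~4]{BN25}, which applies once the moment formulas are shown to have the required form, the generating function of the limiting hole heights is a ratio of theta functions on $R$ evaluated at an argument shifted by a fixed vector $\zeta$; completing the square identifies this ratio as the moment generating function of a lattice Gaussian. Its covariance is the period matrix of $R$, which I would identify with the Dirichlet-form matrix $\tau$ of~\eqref{eqn:taudef} by realizing the harmonic functions $f_j$ as real parts of normalized Abelian integrals and evaluating $\frac12\int_U\nabla f_i\cdot\nabla f_j$ as a period, up to the normalization dictated by the theta-to-discrete-Gaussian correspondence. The factor $\tfrac14$ in~\eqref{eqn:disc_gauss_conv_intro}, and the support on a shifted copy of $\mathbb{Z}^g$, come from the height-function convention together with the fact that local height differences along each boundary component are deterministic, so that $\tfrac14 Z_j$ differs from an integer by a fixed offset.

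The crux is the determination of the shift $e$, which the paper emphasizes has no known general formula. The theta function appearing in $\omega_0$ carries a standard divisor, and by Lemma~\ref{lem:maincomp} this divisor is exactly the collection $d_1,\dots,d_g$ of marked points. Invoking the Riemann vanishing theorem, the fact that $\theta(\int_{d_0}^{\cdot}\vec\omega-\zeta)$ vanishes at $d_1,\dots,d_g$ forces $\zeta=\sum_{j=1}^g\int_{d_0}^{d_j}\vec\omega+\Delta$, with $\Delta$ the vector of Riemann constants, which yields $e=-\zeta$ as in~\eqref{eqn:edefint}. I expect this to be the main obstacle: it requires tracking the divisor through the theta-function computation and, crucially, verifying that $e\in\mathbb{R}^g$ so that the limiting distribution is a genuine real, centered discrete Gaussian. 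The reality is exactly where the $M$-curve structure of $R$ enters, through the fact that the marked points lie on the fixed locus of the anti-holomorphic involution and that $\Delta$ inherits the corresponding reality property.

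Finally I would upgrade convergence of moments to convergence in distribution. The centered discrete Gaussian has Gaussian tails, so its moments satisfy Carleman's condition and it is determined by its moments; since the $Z_j$ are lattice-valued, convergence of all joint moments to those of $X-\mathbb{E}[X]$ gives the weak convergence in~\eqref{eqn:disc_gauss_conv_intro}. If instead the generating function is shown to converge on a neighborhood of the origin, convergence of characteristic functions on the dual torus yields the same conclusion directly.
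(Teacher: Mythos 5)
Your proposal takes essentially the same route as the paper: specialize the cumulant/moment formulas from Corollary~\ref{cor:moments2} with all points sent to the hole boundaries, defer to the machinery of~\cite[Section 4]{BN25} to express the limiting cumulant generating function through $\log\theta(Bz+e)$ and the modular (Poisson-summation) transformation, pin down the shift via the standard divisor $d_1+\cdots+d_g$, Riemann vanishing, and the $M$-curve reality of $e$, identify the scale matrix with $\tau$, and conclude by moment determinacy. This matches the paper's proof up to minor slips that would self-correct in execution: the closed cycles arising when $z_i$ tends to an inner boundary are the $B_j$-cycles of the double (not the boundary ovals $A_j$, though you correctly speak of $b$-periods afterwards), ``taking $K=0$'' should read ``taking all $K$ points on inner boundaries,'' and the limiting scale matrix is $\i B^{-1}=(\Im B)^{-1}$ rather than the period matrix itself, which is exactly the normalization your hedge anticipates and which the paper resolves via Equation (98) of~\cite{BN25}.
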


 The probability distribution defined by the right hand side of~\eqref{eqn:disc_gauss} is called a \emph{discrete Gaussian distribution}. It is a Gaussian random vector conditioned to take values in~$\mathbb{Z}^g$; it is also Shannon entropy maximizing among probability distributions supported on~$\mathbb{Z}^g$ which have a fixed mean and covariance matrix~\cite{AA19}. The parameter~$e \in \mathbb{R}^g$ is called the \emph{shift parameter}, and~$\tau \in \mathbb{R}^{g \times g}$ is called the \emph{scale matrix}. Lecture 24.2 of~\cite{Gor21} predicts the form of the distribution of height fluctuations for random tilings of multiply connected regions, and in addition Conjecture 24.2 predicts the scale matrix of the discrete Gaussian. Theorems \ref{thm:intro_main1} and \ref{thm:intro_main2} confirm these predictions. The factor of~$\frac{1}{4}$ is due to the chosen height function convention.

The rest of the paper is organized as follows. Section~\ref{sec:prelim} states results from~\cite{Ken99}, and provides the necessary facts about Riemann surfaces and their associated theta functions that we need for this work. Then, Section~\ref{sec:joint_moments} provides an explicit computation in terms of theta functions of the formulas for moments given by~\cite{Ken99}, leading ultimately to the proofs of Theorems~\ref{thm:intro_main1} and~\ref{thm:intro_main2}.
\subsection{Acknowledgments}
The author thanks Vadim Gorin, Alexei Borodin, and Tomas Berggren for valuable feedback. The author was supported by the NSF grant No. DMS 2402237.

\section{Preliminaries}
\label{sec:prelim}
\subsection{Results of~\cite{Ken99}, functions~$F_+$ and~$F_-$}
\label{subsec:kenres}
Let~$U$ be a Jordan domain with~$g+1$ smooth boundary curves. The function~$F_0(z_1,z_2)$ on~$U \times U$ is uniquely defined by the following properties, viewed as a function of~$z_2$ with~$z_1$ fixed:
\begin{enumerate}
\item It is meromorphic.
\item It has zero real part
along~$\partial U$, except possibly at points~$d_1,\dots,d_g$ (where it may have a pole).
\item It has a simple pole at~$z_2 = z_1$ with residue~$1/\pi$, and may have at most a simple pole at $d_1,\dots,d_g$, and there are no other poles. 
    
\item It is zero at~$d_0$.
\end{enumerate}
    
Moreover, the function~$F_1(z_1,z_2)$ on~$U \times U$ is uniquely defined from the following properties, viewed as a function of~$z_2$ with~$z_1$ fixed:
\begin{enumerate}
\item It is meromorphic.
\item It has zero imaginary part
along~$\partial U$, except possibly at points~$d_1,\dots,d_g$ (where it may have a pole).
\item It has a simple pole at~$z_2 = z_1$ with residue~$1/\pi$, and may have at most a simple pole at $d_1,\dots,d_g$, and there are no other poles. 
\item It is zero at~$d_0$.
\end{enumerate}
As we will see, (and as is already implicit in~\cite{Ken99}) one can extend~$F_0$ and~$F_1$ to the double of~$U$, and this will be useful for explicit computations. 

The functions we need in the statement below are
\begin{align}\label{eqn:fpdef}
    F_+(z_1,z_2) &\coloneqq F_0(z_1,z_2) + F_1(z_1,z_2) \\
     F_-(z_1,z_2) &\coloneqq F_0(z_1,z_2) - F_1(z_1,z_2) .
     \label{eqn:fmdef}
\end{align}
Proposition 20 of~\cite{Ken99} states the following. The proposition below remains valid if some point~$z_i$ is on the boundary of~$U$.
\begin{prop}\label{prop:moments}
   Fix pairwise distinct points~$z_1,\dots, z_K \in U$, and for each~$\epsilon$ choose lattice points~$z_1^{(\epsilon)},\dots, z_K^{(\epsilon)}$ of~$P_\epsilon$, such that~$z_i^{(\epsilon)}$ is within~$O(\epsilon) $ of~$z_i$. Let~$\gamma_i$,~$i=1,\dots,K$ be paths joining~$A_0$ to~$z_i$, which are disjoint. The centered moment of heights converges
   \begin{multline}\label{eqn:joint_moment}
     \lim_{\epsilon \rightarrow 0} \mathbb{E}\left[  \prod_{j=1}^K (h(z_j^{(\epsilon)}) - \mathbb{E} [h(z_j^{(\epsilon)})]) \right]  \\
     = (-\i)^K\sum_{\varepsilon_1,\dots,\varepsilon_K \in \{\pm\}}  \varepsilon_1 \cdots \varepsilon_K \int_{\gamma_1} \cdots \int_{\gamma_K} \det(F_{\varepsilon_i, \varepsilon_j}(z_i^{(\varepsilon_i)}, z_j^{(\varepsilon_j)}))_{i,j=1}^K d z_1^{(\varepsilon_1)} \cdots d z_K^{(\varepsilon_K)}
   \end{multline}
   where~$d z_j^{(1)} = d z_j$ and~$d z_j^{(-1)}=d \bar z_j$, and
   \begin{equation}\label{eqn:Fepsdef}
       F_{\varepsilon_i, \varepsilon_j}(z_i, z_j) = \begin{cases}
           0 , &  i = j \\
           F_+(z_i, z_j) , &  (\varepsilon_i, \varepsilon_j) = (1,1) \\
           F_-(z_i, z_j) , &  (\varepsilon_i, \varepsilon_j) = (-1,1) \\
           \overline{F_-(z_i, z_j)} , &  (\varepsilon_i, \varepsilon_j) = (1,-1) \\
           \overline{F_+(z_i, z_j)} , &  (\varepsilon_i, \varepsilon_j) = (-1,-1).
       \end{cases}
   \end{equation}
\end{prop}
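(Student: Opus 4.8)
The plan is to derive~\eqref{eqn:joint_moment} from Kasteleyn theory for the bipartite graph underlying~$P_\epsilon$, by reducing the height moments to determinantal correlations of dimer-occupation variables and then passing to the scaling limit; the analytic input is the convergence of the (rescaled) inverse Kasteleyn matrix to the functions~$F_0, F_1$ established earlier in~\cite{Ken99}. First I would write each centered height as a linear statistic of the dimer configuration. By the local rules defining~$h$, the increment of the height across a lattice edge equals~$+3$ or~$-1$ according to whether the edge crosses a domino; writing this as~$4\,\mathbf{1}_e - 1$, where~$\mathbf{1}_e$ is the indicator that the domino dual to the edge is present, the difference~$h(z_j^{(\epsilon)}) - h(v_0)$ telescopes along~$\gamma_j$ into a signed sum over edges~$e$ crossed by~$\gamma_j$ of~$4\,\mathbf{1}_e$ plus a deterministic term. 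Consequently~$h(z_j^{(\epsilon)}) - \mathbb{E}[h(z_j^{(\epsilon)})]$ is a signed sum of \emph{centered} indicators~$\mathbf{1}_e - \mathbb{E}[\mathbf{1}_e]$, and the left-hand side of~\eqref{eqn:joint_moment} expands multilinearly into centered joint correlations of edge occupations, one edge~$e_j \in \gamma_j$ per factor.

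Next I would invoke the determinantal structure of the dimer model. For distinct edges~$e_s = (b_s, w_s)$ one has~$\mathbb{E}[\prod_s \mathbf{1}_{e_s}] = \big(\prod_s K(b_s,w_s)\big)\det\big(K^{-1}(w_s,b_t)\big)_{s,t}$, where~$K$ is the Kasteleyn matrix and~$K^{-1}$ its inverse (the coupling function). The key combinatorial point is that \emph{centering} removes exactly the diagonal contractions: for two edges,~$\mathrm{Cov}(\mathbf{1}_e,\mathbf{1}_f) = -K(b_e,w_e)K(b_f,w_f)K^{-1}(w_e,b_f)K^{-1}(w_f,b_e)$, i.e. the off-diagonal product, and in general the centered joint moment is a determinant of the matrix~$\big(K^{-1}(w_i,b_j)K(b_j,w_j)\big)$ with its \emph{diagonal set to zero}. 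This is the discrete precursor of the~$\det(F_{\varepsilon_i,\varepsilon_j})$ with vanishing diagonal in~\eqref{eqn:Fepsdef}.

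Finally I would pass to the continuum limit. The two black sublattices~$B_0$ and~$B_1$ give rise to two distinct scaling limits of the coupling function, and it is their combination that produces~$F_0 \pm F_1 = F_{\pm}$; summing over which behavior is selected at each of the~$K$ factors yields the sum over~$\varepsilon_1,\dots,\varepsilon_K \in \{\pm\}$. The real height increment decomposes into holomorphic and antiholomorphic pieces indexed by~$\varepsilon_j$; carrying this decomposition through the product, together with the orientation of the~$\gamma_j$, produces the prefactor~$(-\i)^K\,\varepsilon_1\cdots\varepsilon_K$, replaces~$dz_j$ by~$dz_j$ or~$d\bar z_j$ according to~$\varepsilon_j$, and turns~$K^{-1}(w_i,b_j)$ into~$F_{\varepsilon_i,\varepsilon_j}(z_i,z_j)$ or its conjugate as in~\eqref{eqn:Fepsdef}. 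Under this limit the edge sums along each~$\gamma_j$ become the contour integrals~$\int_{\gamma_j}\cdots\, dz_j^{(\varepsilon_j)}$, giving exactly~\eqref{eqn:joint_moment}.

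The hard part will be the analytic control in the last step: one must show that the rescaled coupling function converges to~$F_0, F_1$ uniformly enough --- including uniformity as the arguments approach the diagonal, where the~$1/\pi$ pole appears, and near the special vertices~$\tilde d_j$, which is exactly where the hypothesis that~$\partial P_\epsilon$ be flat in a~$\delta(\epsilon)$-neighborhood of~$\tilde d_j$ is used --- so that the Riemann sums over edges converge to the iterated contour integrals with controllable error. This is precisely the content of the discrete-complex-analysis estimates behind~\cite[Theorem 13]{Ken99}. A secondary point, handled by the combinatorial reduction above together with the fact that~$F_0,F_1$ are closed away from their poles, is to verify that the resulting integral is independent of the chosen disjoint paths~$\gamma_i$ and extends continuously to~$z_i \in \partial U$.
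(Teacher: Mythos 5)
Your outline is mathematically sound, but it reconstructs a proof the paper never actually gives: the paper's entire proof of Proposition~\ref{prop:moments} is a citation of \cite[Proposition 20]{Ken99}, together with two bookkeeping remarks --- that Kenyon's statement, though phrased for points $z_i$ on boundary components, is proved there for interior points as well, and that the factor $(-\i)^K$ is missing from Kenyon's statement but present in his Equation (21). What you have written is, in structure, Kenyon's own argument: the telescoping of $h$ along $\gamma_j$ via increments $4\,\mathbf{1}_e-1$, the reduction of centered edge correlations to a determinant with zeroed diagonal (your identity here is correct: writing $N_{st}=K(b_s,w_s)K^{-1}(w_s,b_t)$, so $N_{ss}=p_s$, and expanding $\det(N-D)$ with $D=\mathrm{diag}(N_{ss})$ by multilinearity in the rows reproduces exactly the inclusion--exclusion expansion of $\mathbb{E}\prod_s(\mathbf{1}_{e_s}-p_s)$; disjointness of the $\gamma_i$ keeps all edges distinct, as needed), and the passage to the limit via the coupling-function asymptotics of \cite[Theorem 13]{Ken99}, where the sublattice structure produces the $F_0\pm F_1=F_\pm$ split, the sum over $\varepsilon_1,\dots,\varepsilon_K$, and the pairing of $dz_j$ versus $d\bar z_j$. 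You also correctly locate the genuinely hard analytic content --- uniform convergence of the rescaled inverse Kasteleyn matrix, including near the $\tilde d_j$, which is exactly where the flat-boundary hypothesis in a $\delta(\epsilon)$-neighborhood enters --- and defer it to \cite{Ken99}, which is the same ultimate source of rigor as the paper's citation, so the two routes agree in substance, with yours supplying the combinatorial skeleton the paper leaves implicit. The one under-specified point is the parity bookkeeping behind the exact prefactor $(-\i)^K\varepsilon_1\cdots\varepsilon_K$: you assert it emerges from the holomorphic/antiholomorphic decomposition and orientations, which is the right mechanism, but this is precisely the constant that Kenyon's Equation (21) pins down (and where the paper corrects the omission in his stated proposition), so a complete write-up would verify those constants against that computation rather than take them on faith.
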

\begin{proof}
The proposition in~\cite{Ken99} is only stated for the case when each point~$z_i$ is on a boundary component of~$U$. However, as noted there, the proof clearly provides~\eqref{eqn:joint_moment} for any~$z_1,\dots,z_K \in U$. Moreover, the factor~$(-\i)^K$ is missing from the statement of the proposition, but it is present in the proof; see Equation (21) there.
\end{proof}

In addition, it is shown (\cite[Proposition 15]{Ken99}) that~$F_+$ is holomorphic in both variables, and~$F_-$ is anti-holomorphic in the first variable and holomorphic in the second variable. Moreover, the~$(1,0)$ forms~$F_+(z_1, z_2) d z_1$ and~$F_-(z_1, z_2) d \bar z_1$ are invariant under conformal maps. In other words, under changes of coordinates~$F_+$ transforms as a function in the second variable and as a holomorphic one form in the first variable, and~$F_-$ transforms as a function in the second variable and an anti-holomorphic one form in the first variable. Using~\eqref{eqn:joint_moment} in the case that each~$z_i \rightarrow A_{j_i}$ for some~$j_i$, it is deduced in~\cite{Ken99} that the joint centered height moments~$\mathbb{E}[Z_1^{n_1} \cdots Z_g^{n_g}] $ of heights~$h_1,\dots, h_g$ at the boundaries of the~$g$ holes are asymptotically invariant under conformal transformations; clearly, this conformal invariance holds more generally for any heights at~$z_1,\dots, z_K \in U$ as in~\eqref{eqn:joint_moment}.

\begin{figure}
    \centering
\includegraphics[width=0.33\linewidth]{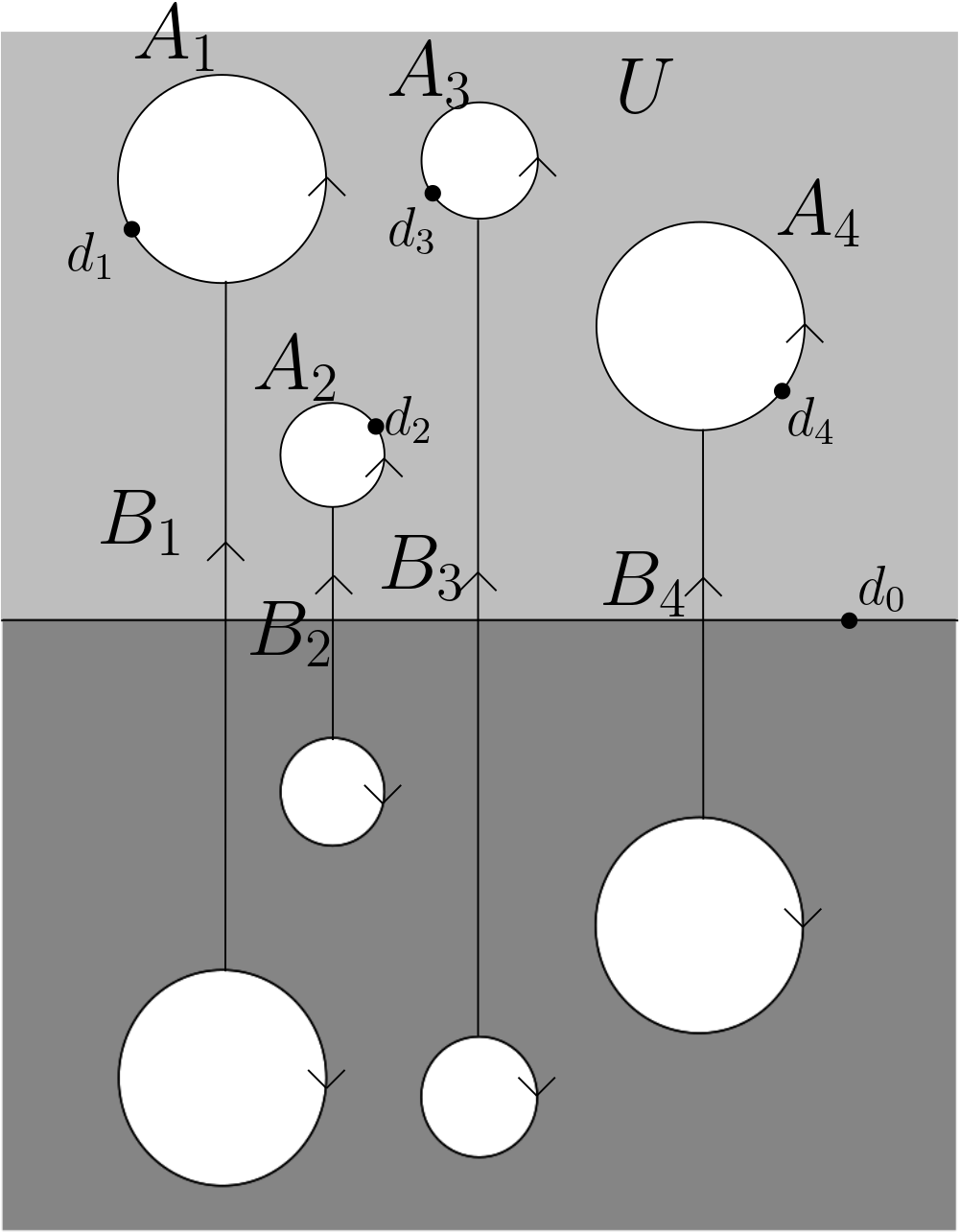}
    \caption{The~$M$ curve obtained as double of the planar domain~$U$ with marked points~$d_0,\dots,d_g$, in the case when~$U$ is the upper half plane with~$g$ circular holes cut out.}
    \label{fig:Udouble}
\end{figure}

\subsection{The double of a planar domain, theta functions, and prime forms}\label{subsec:doubleU}

 We will ultimately write a formula the functions~$F_+$,~$F_-$ in terms of theta functions defined on the compact Riemann surface~$R$, which we define as the double of~$U$. Though it is standard, to be concrete, we will explicitly describe the conformal structure on the double of~$U$. Then, we will briefly outline several facts about compact Riemann surfaces, and in particular about a class known as~$M$ curves into which our Riemann surface falls. We will also present the properties of theta functions and prime forms needed in our construction; the reader is referred to the wonderful reference~\cite{BK11} for more details, and also to the classical texts~\cite{Fay73} and~\cite{Mum07a}.

Define~$R$ as the surface obtained by gluing~$U$ to itself along its boundary. The natural map~$\sigma : R \rightarrow R$ given by swapping the copies of~$U$ will be antiholomorphic, once~$R$ is equipped with a conformal structure, and the fixed point set of~$\sigma$ is~$\partial U$, which we assume consists of~$g+1$ piecewise smooth boundary curves. So~$R = U \sqcup \sigma U / \text{gluing}$. Topologically,~$R$ is a compact genus~$g$ closed surface. Local charts for~$R$ can be defined as follows. For any neighborhood~$V$ contained in the interior of~$U$, use the natural coordinate~$z$ on~$U \subset \mathbb{C}$ as a local coordinate; for~$\sigma V$ (which is another copy of~$V$), use the coordinate~$\bar z$.  For a neighborhood~$V$ around a point in~$\partial U$ satisfying~$\sigma(V) = V$, as a local coordinate use a homeomorphism~$\phi$ which maps~$V$ to a symmetric-under-conjugation neighborhood in the upper half plane, such that~$V \cap \partial U$ maps into~$\mathbb{R}$, and~$V \cap U$ is conformally mapped to~$\phi(V) \cap \mathbb{H}$ (where~$\mathbb{H}$ is the upper half plane). We require that in such a local coordinate~$\phi$, the map~$\sigma$ corresponds to conjugation~$\phi \mapsto \bar \phi$. This provides~$R$ the structure of a Riemann surface, which we again emphasize is compact and has genus~$g$. Moreover, since the fixed point set of the antiholomorphic involution~$\sigma$ consists of~$g+1$ ovals, this surface is a so-called~\emph{M-curve}; for an informative exposition on M-curves we refer the reader to~\cite[Section 2]{BCT22}.

By the conformal invariance property discussed after Proposition~\ref{prop:moments}, before performing our analysis we may uniformize~$U$ to a certain model space (we do this for no reason other than for concreteness). The Koebe uniformization theorem implies that~$U$ can be conformally mapped to the upper half plane with~$g$ circular holes cut out, so from now on we assume that~$U$ is one such domain. With this realization of~$U$,~$R$ is given by gluing~$U$ to its conjugate along~$\mathbb{R}$, with conjugate pairs of circles identified. Away from the boundaries of circular holes, the coordinate~$z \in \mathbb{C}$ can be used for the surface, and~$\sigma(z) = \bar z$ will be the complex conjugation map. Throughout this note, we will talk about actual points, say,~$q_1, q_2$ on the surface in terms of their~$z$ coordinates~$z_1, z_2$.

We choose cycles~$A_j$,~$j=0,\dots,g$ and~$B_j$,~$j=1,\dots,g$ as in Figure~\ref{fig:Udouble}. Note~$A_i \circ B_j = \delta_{i j}$,~$i,j=1,\dots,g$, with~$\circ $ denoting the intersection pairing. Denote by~$\vec{\omega} = (\omega_1,\dots, \omega_g)$ the basis of~$g$ holomorphic one forms on~$R$ dual to this choice of~$A_1,\dots, A_g$ and~$B_1,\dots, B_g$, normalized so~$\int_{A_i}\omega_j = \delta_{i j}$. Let~$B$ be the corresponding period matrix defined by~$B_{i j} = \int_{B_i} \omega_j$.  The matrix~$B$ is symmetric and has positive definite imaginary part. Since~$R$ is a so-called~\emph{M-curve},~$B$ is purely imaginary~\cite[Lemma 11]{BCT22}.

 Define the \emph{theta function}, which is an entire map~$\theta : \mathbb{C}^g \rightarrow \mathbb{C}$, by 
\begin{equation}\label{eqn:theta_def}
\theta(z) = \theta(z; B)  \coloneqq \sum_{n \in \mathbb{Z}^g} e^{\i \pi  (n\cdot B n + 2 n\cdot z ) }.
\end{equation}
The theta function is quasi-periodic: It satisfies
\begin{equation}\label{eqn:theta_trans}
    \theta(z + m + B n) = \exp(-\i \pi n \cdot B n - 2 \i \pi n \cdot z) \theta(z) 
\end{equation}
for any~$m, n \in \mathbb{Z}^g$.

The~\emph{Jacobi variety} is defined as the quotient
\begin{equation}\label{eqn:jacvar}
J(R) \coloneqq \mathbb{C}^g / (\mathbb{Z}^g + B \mathbb{Z}^g) .
\end{equation}
Equation \eqref{eqn:theta_trans} states that~$\theta$ is quasi-periodic as a function on~$J(R)$.

 From~\eqref{eqn:theta_trans}, for a fixed~$e \in \mathbb{C}^g$, the function on the universal cover~$\widetilde{R} \rightarrow \mathbb{C}$ defined by
\begin{equation}\label{eqn:theta}
    z \mapsto \theta(\int_{d_0}^{z}\vec{\omega} + e)
\end{equation}
has a well defined set of zeros on~$R$; denote with~$D_e$ the formal sum of these zeros, or \emph{zero divisor}~$D_e = \sum_j p_j$. If the function~\eqref{eqn:theta} does not vanish identically, then~$D_e$ consists of~$g$ points (counted with multiplicity), and is uniquely determined by the property
\begin{equation}\label{eqn:theta_div}
\sum_{j=1}^g \int_{d_0}^{p_j}\vec{\omega} = -e + \Delta \qquad \text{ in } J(R)
\end{equation}
where~$\Delta \in J(R)$ is a special point called the \emph{vector of Riemann constants}.

We will also use the \emph{prime form}. Denoting $\tilde z_1$,~$\tilde z_2$ as lifts of~$z_1,z_2$ to the universal cover~$\tilde R$, the prime form is defined by
\begin{equation}\label{eqn:pf}
 E(z_1, z_2) =   \frac{\theta[f](\int_{\tilde z_2}^{\tilde z_1} \vec{\omega})}{\sqrt{H_f(z_1)} \sqrt{H_f(z_2)}}
\end{equation}
where~$f \in (\frac{1}{2} \mathbb{Z} / \mathbb{Z})^{2g}$ is any \emph{non-degenerate odd half-integer theta characteristic},~$\theta[f]$ is a \emph{theta function with characteristic}~$f$, which is a slightly modified version of the theta function, and~$H_f$ is a certain holomorphic one form on~$R$ which admits a well defined square root. We suppress dependence on choices of lifts in the left hand side of~\eqref{eqn:pf} because the expressions for height moments involving prime forms will be independent of the choices of lifts. The prime form~$E(z_1, z_2)$ is a~$(-\frac{1}{2},-\frac{1}{2})$ form on~$\tilde R \times \tilde R$, which means that in local coordinates (which we also call~$\tilde z_1, \tilde z_2$) 
\begin{equation}\label{eqn:pfloc}
E( z_1,  z_2) = \frac{c(\tilde z_1,\tilde z_2)}{\sqrt{d \tilde z_1} \sqrt{d \tilde z_2}}
\end{equation}
where the square roots in the denominator indicate (up to a sign) how~$E$ transforms under changes of variables. Two basic facts are that $E(z_1,z_2)$ does not depend on the choice of~$f$, and $E(z_1,z_2) = - E(z_2,z_1)$.  

In addition, the prime form satisfies the following properties for fixed~$z_1$:

\begin{enumerate}[(I)]
    \item It has a simple zero at any~$\tilde z_2$ such that~$ z_2 = z_1$, and no poles and no other zeros.\label{item:I}
    \item In local coordinates for~$\tilde z_2$ close to~$\tilde z_1$, we have~$E( z_1,   z_2) = \frac{\tilde z_2- \tilde z_1}{\sqrt{d \tilde z_1} \sqrt{d \tilde z_2}} + O(|\tilde z_1-\tilde z_2|^2)$. \label{item:II}
\item If~$z_2'$ is obtained by traversing the cycle~$A_j$ or~$B_i$ starting from~$z_2$, then~$E(z_1,z_2') = E(z_1,z_2)$ and 
     $E(z_1,z_2') = \exp( -\i \pi  B_{i i} - 2 \pi \i \int_{z_1}^{z_2}\omega_i) E(z_2,z_1)$, respectively. \label{item:III}
    
\end{enumerate}

\subsection{Extensions and properties of~$F_+$ and~$F_-$}

Now we would like to extend~$F_+$ and~$F_-$ to objects defined on~$R \times R$; towards this end, we first extend~$z_2 \mapsto F_0(z_1,z_2)$ and~$z_2 \mapsto F_1(z_1,z_2)$, so that we get maps defined on~$U \times R$. If~$z_2 \in \sigma U$, then let~$F_1( z_1,  z_2) = \overline{F_1(z_1, \bar z_2)} $. By the Schwarz reflection principle, this provides a holomorphic extension (away from~$z_1$ and~$d_1,\dots, d_g$) from~$U$ to all of~$R$ because~$\Im F_1$ vanishes for~$z_2 \in \partial U$. Similarly, define~$F_0( z_1,  z_2) = - \overline{F_0(z_1, \bar z_2)} $; this is an analytic extension because~$\Re F_0$ vanishes on~$\partial U$.

Next, we define~$F_+(z_1,z_2) = F_0(z_1,z_2) + F_1(z_1,z_2)$ and $F_-(z_1,z_2) = F_0(z_1,z_2) - F_1(z_1,z_2)$ as in~\eqref{eqn:fpdef} and~\eqref{eqn:fmdef}, where now~$z_2$ can vary over all of~$R$; however, note that so far~$F_{\pm}$ is only defined for~$z_1 \in U$. 



Now we restate properties of~$F_+$ and~$F_-$ as a function of~$z_2 \in R$ for fixed~$z_1$, which follow from the discussion above together with the definitions of~$F_0$ and~$F_1$ given in Section~\ref{subsec:kenres}.
\begin{lem}\label{lem:F+properties}
    For any fixed~$z_1 \in U$, the function~$z_2 \mapsto F_+(z_1,z_2)$ from~$R \rightarrow \mathbb{C}$ satisfies the properties 
    \begin{enumerate}
        \item It is meromorphic in~$z_2$.
        \item It has a simple pole at~$z_2 = z_1$ with residue~$\frac{2}{\pi}$ and possibly a simple pole at~$d_1,\dots, d_g$, and has no other poles.
        \item It vanishes at~$z_2 = d_0$.
    \end{enumerate}
    

   For any fixed~$z_1 \in U$, the function~$z_2 \mapsto F_-(z_1,z_2)$ from~$R \rightarrow \mathbb{C}$ satisfies the properties 
    \begin{enumerate}
        \item It is meromorphic in~$z_2$.
        \item It has a simple pole at~$z_2 = \bar z_1$ with residue~$-\frac{2}{\pi}$ and possibly a simple pole at~$d_1,\dots, d_g$, and has no other poles.
        \item It vanishes at~$z_2 = d_0$.
    \end{enumerate}
\end{lem}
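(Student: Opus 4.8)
The plan is to reduce everything to the defining properties of $F_0$ and $F_1$ recorded in Section~\ref{subsec:kenres}, together with the Schwarz reflection extensions introduced just above, and then to track poles and residues separately on the two copies $U$ and $\sigma U$ that make up $R$. The first step is to transport all information to $\sigma U$ via reflection. Using the extension formulas $F_0(z_1,z_2)=-\overline{F_0(z_1,\bar z_2)}$ and $F_1(z_1,z_2)=\overline{F_1(z_1,\bar z_2)}$, valid for $z_2\in\sigma U$, and combining them through~\eqref{eqn:fpdef}--\eqref{eqn:fmdef} gives, for $z_2\in\sigma U$,
\begin{align*}
F_+(z_1,z_2) &= -\,\overline{F_-(z_1,\bar z_2)},\\
F_-(z_1,z_2) &= -\,\overline{F_+(z_1,\bar z_2)}.
\end{align*}
These identities express the behavior of each function on $\sigma U$ in terms of the behavior of the other on $U$, and will let me deduce the $\sigma U$ pole structure from the (already known) $U$ pole structure.

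For meromorphy I would argue arc by arc. Away from the marked points, along each smooth boundary arc $F_0$ (resp.\ $F_1$) is holomorphic up to the boundary with vanishing real (resp.\ imaginary) part, so in a boundary chart of the double (upper half plane with $\sigma$ equal to conjugation) the reflection principle produces a holomorphic extension across that arc; hence $F_0$, $F_1$, and therefore $F_\pm$, are holomorphic across $\partial U\setminus\{d_0,\dots,d_g\}$. At each $d_j$, which lies on the fixed locus of $\sigma$, the pole sits on the reflection axis, and here I would check that the two boundary arcs abutting $d_j$ are reflected by the \emph{same} global formula, so the continuation to the opposite half-disk is unambiguous and single-valued on a punctured disk around $d_j$; since a simple pole on the $U$ side reflects to a simple pole, the glued function remains meromorphic with at most a simple pole at $d_j$. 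This is where the flatness of $\partial U$ near $d_j$ and the one-sided tangent hypotheses are used.

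Finally I would read off poles and residues. On $U$, both $F_0$ and $F_1$ have a simple pole at $z_2=z_1$ with residue $1/\pi$, so $F_+=F_0+F_1$ has a simple pole there with residue $2/\pi$, whereas the residues cancel in $F_-=F_0-F_1$, leaving $F_-$ regular at $z_1$; both have at most simple poles at $d_1,\dots,d_g$ and no others by definition. Transporting via the reflection formulas: because $F_-$ is regular at $z_1$, $F_+(z_1,z_2)=-\overline{F_-(z_1,\bar z_2)}$ has no pole at $z_2=\bar z_1$, while $F_-(z_1,z_2)=-\overline{F_+(z_1,\bar z_2)}$ inherits a simple pole at $z_2=\bar z_1$; expanding $F_+(z_1,\bar z_2)=\tfrac{2/\pi}{\bar z_2-z_1}+O(1)$ and conjugating (the residue $2/\pi$ being real) yields principal part $-\tfrac{2/\pi}{z_2-\bar z_1}$, i.e.\ residue $-\tfrac{2}{\pi}$, as claimed. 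The poles at $d_1,\dots,d_g$ survive the reflection and no new poles appear, and vanishing at $d_0$ is immediate since $F_0(z_1,d_0)=F_1(z_1,d_0)=0$ forces $F_\pm(z_1,d_0)=0$.

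I expect the main obstacle to be precisely the treatment of the marked points $d_j$ on the reflection axis: confirming that Schwarz reflection across the two incident boundary arcs glues to a single-valued meromorphic function carrying only a simple pole there, rather than introducing monodromy or a worse singularity. Everything else is either the standard reflection principle across smooth arcs or the elementary residue bookkeeping above.
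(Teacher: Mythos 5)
Your argument is correct and is essentially the paper's own: the lemma is stated there as following directly from the Schwarz reflection extensions $F_0(z_1,z_2)=-\overline{F_0(z_1,\bar z_2)}$ and $F_1(z_1,z_2)=\overline{F_1(z_1,\bar z_2)}$ together with the defining properties of $F_0$ and $F_1$, which is exactly the pole-and-residue bookkeeping you carry out (your cross-relations $F_{\pm}(z_1,z_2)=-\overline{F_{\mp}(z_1,\bar z_2)}$ on $\sigma U$ are the second-variable analogue of~\eqref{eqn:Fmm}). One cosmetic remark: the flatness of the boundary near $\tilde d_j$ is a hypothesis on the discrete approximation $P_\epsilon$ needed in~\cite{Ken99}, not an input to this continuum lemma --- after uniformizing $U$ to a circular domain the boundary is smooth at each $d_j$, so the reflection there requires no extra hypothesis.
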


\section{Computing the joint moments}
\label{sec:joint_moments}
\subsection{A formula for~$F_+$ and~$F_-$}
\label{subsec:FpFm}

We first have a lemma which gives an explicit representation of the functions~$F_+$ and~$F_-$, and it also extends their definitions to all of~$R \times R$. Define~$e \in J(R)$ by
\begin{equation}\label{eqn:edef}
    e \coloneqq -\sum_{j=1}^g \int_{d_0}^{d_j}\vec{\omega} + \Delta.
\end{equation}
By Lemma 19~\cite{BCT22} we have~$e \in \mathbb{R}^g/\mathbb{Z}^g$, and by Lemma 18 of the same work, the function~$z \mapsto \theta(\int_{d_0}^{z}\vec{\omega}+e)$ does not vanish identically, so \eqref{eqn:theta_div} describes its zero divisor. 
The identification of~$e$, which will be the shift in the theta functions used in the explicit expression below, is a crucial step in our computation.

\begin{lem}\label{lem:maincomp}
    We have, for all~$z_1 \in U$, \begin{equation}\label{eqn:Fpp}
   F_+(z_1,z_2) d z_1= \frac{2}{\pi} \frac{\theta(\int_{z_1}^{z_2} \vec{\omega} + e)}{\theta(e) E(z_1,z_2)} \frac{E(d_0,z_2) \theta(\int_{d_0}^{z_1} \vec{\omega} + e) }{E(d_0,z_1) \theta(\int_{d_0}^{z_2} \vec{\omega} + e)}
    \end{equation}
 and
 \begin{equation}\label{eqn:Fmm}
   F_-(z_1,z_2) d \bar z_1= - \sigma_{z_1}^* \left( F_+(z_1, z_2) d z_1 \right)
    \end{equation}
    where the right hand side denotes (minus) the pullback under~$\sigma$ of the one form~$F_+(z_1, z_2) d z_1$ in the variable~$z_1$.
\end{lem}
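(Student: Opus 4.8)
The plan is to prove \eqref{eqn:Fpp} by verifying that its right-hand side, viewed as a function of $z_2 \in R$ with $z_1 \in U$ fixed, satisfies the three characterizing properties of $F_+$ listed in Lemma~\ref{lem:F+properties}, and then invoking uniqueness. Write $G(z_1,z_2)$ for the right-hand side of \eqref{eqn:Fpp}. The relevant uniqueness statement is that any $(1,0)$-form in $z_1$ which is a meromorphic function in $z_2$ with a simple pole of residue $\tfrac{2}{\pi}$ at $z_2=z_1$, at worst simple poles at $d_1,\dots,d_g$, no other poles, and a zero at $z_2=d_0$, is determined uniquely. This holds because the difference of two such objects lies in $L(d_1+\cdots+d_g)$ and vanishes at $d_0$; since $d_1+\cdots+d_g = D_e$ is the zero divisor of the nonvanishing theta section \eqref{eqn:theta}, it is nonspecial, so $\ell(d_1+\cdots+d_g)=1$ by Riemann--Roch, and vanishing at $d_0$ removes the constants.

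The first and most delicate step is to check that $G$ is genuinely single valued in $z_2$ on $R$ (and transforms correctly in $z_1$), despite being built from the multivalued prime form and from theta functions of path-dependent integrals. I would compute the monodromy of $G$ as $z_2$ traverses a cycle. Around $A_j$ nothing happens: the theta arguments shift by integer vectors since $\int_{A_j}\vec\omega \in \mathbb{Z}^g$, and $E$ is $A$-periodic by property~\eqref{item:III}. Around $B_j$, the quasi-periodicity \eqref{eqn:theta_trans} contributes one automorphy factor from $\theta(\int_{z_1}^{z_2}\vec\omega+e)$ and its inverse from $\theta(\int_{d_0}^{z_2}\vec\omega+e)$, while property~\eqref{item:III} contributes factors from $E(z_1,z_2)$ and $E(d_0,z_2)$; a direct accounting shows all four exponentials cancel, so $G$ descends to $R$. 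Tracking the half-integer weights of the prime forms likewise confirms that $G$ carries weight $+1$ in $z_1$, from the two factors $1/E(z_1,z_2)$ and $1/E(d_0,z_1)$, and weight $0$ in $z_2$, matching $F_+(z_1,z_2)\,dz_1$. This bookkeeping is where I expect the main risk of error to lie.

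Next I would read off the divisor of $G$ in $z_2$. The simple zero of $E(z_1,z_2)$ at $z_2=z_1$ (property~\eqref{item:I}) gives the pole at $z_1$; using the local expansion in property~\eqref{item:II} together with $\theta(e)\neq 0$, which holds because $D_e$ avoids $d_0$, so that $\theta(\int_{d_0}^{d_0}\vec\omega+e)=\theta(e)$ is nonzero, one finds the $\sqrt{dz_2}$ factors cancel against those in $E(d_0,z_2)/E(d_0,z_1)$ to leave $\tfrac{2}{\pi}\,dz_1/(z_2-z_1)$, giving residue exactly $\tfrac{2}{\pi}$. The poles at $d_1,\dots,d_g$ arise from the zeros of the denominator $\theta(\int_{d_0}^{z_2}\vec\omega+e)$: this is precisely where the identification \eqref{eqn:edef} of $e$ enters, since \eqref{eqn:theta_div} and \eqref{eqn:edef} give $\sum_j\int_{d_0}^{p_j}\vec\omega = \sum_j\int_{d_0}^{d_j}\vec\omega$ in $J(R)$, whence $D_e = d_1+\cdots+d_g$ by the uniqueness in \eqref{eqn:theta_div}. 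The factor $E(d_0,z_2)$ supplies the required zero at $z_2=d_0$, and the numerator theta contributes only allowed zeros. With all properties verified, uniqueness yields \eqref{eqn:Fpp}.

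Finally, \eqref{eqn:Fmm} follows from the same uniqueness principle. Since $\sigma$ is conjugation, $\sigma_{z_1}^*(F_+(z_1,z_2)\,dz_1) = F_+(\bar z_1, z_2)\,d\bar z_1$, so \eqref{eqn:Fmm} asserts $F_-(z_1,z_2) = -F_+(\bar z_1,z_2)$ for $z_1\in U$. As a function of $z_2$, the right side is meromorphic with a simple pole of residue $-\tfrac{2}{\pi}$ at $z_2=\bar z_1$, inherited from $F_+$'s pole of residue $\tfrac{2}{\pi}$, together with at worst simple poles at $d_1,\dots,d_g$ and a zero at $d_0$; these are exactly the characterizing properties of $F_-(z_1,z_2)$ in Lemma~\ref{lem:F+properties}, and the pullback of a $(1,0)$-form under the antiholomorphic $\sigma$ produces the $d\bar z_1$ weight. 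Uniqueness then gives \eqref{eqn:Fmm}. Conceptually the crux is the divisor identification $D_e = d_1+\cdots+d_g$, but that is handed to us by \eqref{eqn:edef}, \eqref{eqn:theta_div}, and the cited $M$-curve lemmas, leaving the monodromy cancellation as the principal technical hurdle.
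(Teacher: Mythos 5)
Your proposal is correct, and its overall architecture coincides with the paper's: check that the right-hand side of \eqref{eqn:Fpp} is single valued in $z_2$ on $R$ via the cancellation of the quasi-periodicity factors from \eqref{eqn:theta_trans} and property \eqref{item:III} (your $B_j$-cycle accounting is right, and the half-form weight count in $z_1$, $z_2$, $d_0$ also checks out), identify its poles and its zero at $d_0$ using the prime form and the divisor identification $D_e = d_1+\cdots+d_g$ forced by \eqref{eqn:edef} together with the uniqueness in \eqref{eqn:theta_div}, and conclude by rigidity on the compact surface; \eqref{eqn:Fmm} is then handled, as in the paper, by matching the characterizing properties of $F_-$ from Lemma~\ref{lem:F+properties}. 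The one step you execute differently is the rigidity argument: the paper forms the ratio of $F_+$ to the candidate expression, argues it is holomorphic on $R$, hence constant, and normalizes the constant to $1$ by sending $z_2 \to z_1$ and invoking property \eqref{item:II}; you instead subtract the two candidates and apply Riemann--Roch, observing that the difference lies in $L(d_1+\cdots+d_g)$, that $\ell(d_1+\cdots+d_g)=1$ because the zero divisor of the non-identically-vanishing theta section is nonspecial, and that vanishing at $d_0$ kills the remaining constants. Your additive version is marginally cleaner on one point: a literal reading of the ratio argument must also rule out poles of the ratio at the zeros of the candidate (e.g.\ the zeros in $z_2$ of the numerator $\theta(\int_{z_1}^{z_2}\vec{\omega}+e)$), an issue the difference argument never confronts since it only consumes pole data; you also record explicitly that $\theta(e)\neq 0$ (because $d_0 \notin D_e$), which the denominator of \eqref{eqn:Fpp} implicitly requires and the paper leaves to the citation of the $M$-curve lemmas. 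Both routes rest on the same essential input, the identification of the shift $e$ so that the theta denominator vanishes exactly at $d_1,\dots,d_g$, so the two proofs buy the same theorem at essentially the same price.
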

\begin{proof}
Temporarily denote the right hand side of~\eqref{eqn:Fpp} by~$\tilde{F}_{+}(z_1, z_2) dz_1$, for fixed~$z_1 \in U$. First, we observe (using the the quasi-periodicity properties~\eqref{eqn:theta_trans} and~\eqref{item:III} of the theta function and prime form) that the meromorphic function~$z_2 \mapsto \tilde{F}_{+}(z_1, z_2)$ picks up a monodromy factor of~$1$ around any cycle, i.e. it is well defined on~$R$. 

Next, observe
    \begin{equation}\label{eqn:rat}
      z_2 \mapsto   \frac{F_{+} (z_1,z_2)}{\tilde{F}_{+} (z_1,z_2)} 
    \end{equation}
    is holomorphic on~$R$ because~$z_2 \mapsto \tilde{F}_{+} (z_1,z_2)$ has a pole at~$z_1$, and at each~$d_j$,~$j=1,\dots,g$, due to the determining property~\eqref{eqn:theta_div} of the zero divisor of the theta function, which holds with~$d_j$ replacing~$p_j$ there if~$e$ is given by~\eqref{eqn:edef}; compare with the properties of~$F_+$ listed in Lemma~\ref{lem:F+properties}. Therefore,~\eqref{eqn:rat} must be constant ($R$ is compact). Sending~$z_2 \rightarrow z_1$ and using the behavior of the prime form at the diagonal, property~\eqref{item:II}, we see that the constant is~$1$, i.e.~$\tilde{F}_{+}(z_1,z_2) dz_1 = F_+(z_1,z_2) d z_1$.

    Clearly~\eqref{eqn:Fpp} can be extended to a meromorphic one form in~$z_1$ defined on all of~$R$, so that it is defeind on all of~$R \times R$. Using this, the right hand side of~\eqref{eqn:Fmm} makes sense, and similar arguments together with the second part of Lemma~\ref{lem:F+properties} can be used to prove its validity.
\end{proof}

\begin{figure}
    \centering
    \includegraphics[width=0.33\linewidth]{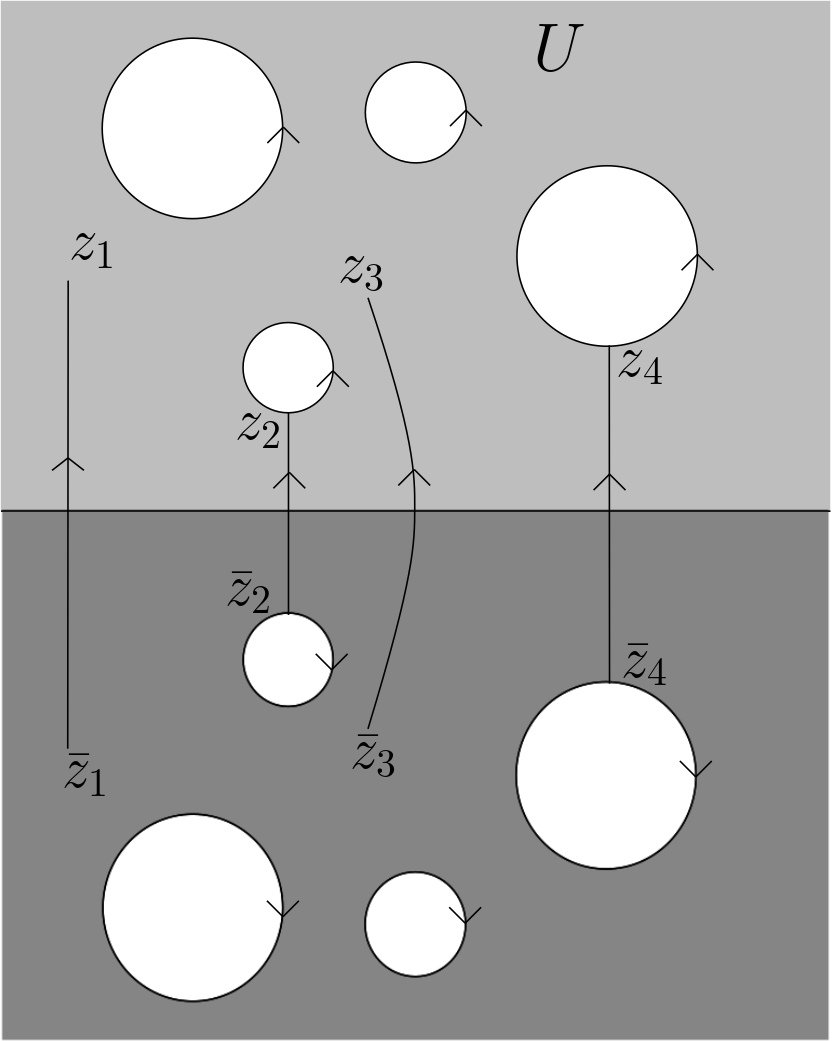}
    \caption{Integration paths for height moments. Some points~$z_i$ may be on boundary circles.}
    \label{fig:Udub_integrals}
\end{figure}

We now may rewrite the limiting joint height moment, the right hand side of the display in Proposition~\ref{prop:moments} above, in terms of integrals on the compact surface~$R$. Below when we write~$\int_{\bar z_1}^{z_1}\cdots \int_{\bar z_K}^{z_K}$ we mean integration over~$K$ disjoint paths in~$R$ connecting~$\bar z_i$ and~$z_i$, which are symmetric under conjugation, see Figure~\ref{fig:Udub_integrals}.
\begin{cor}\label{cor:moments2}
Define the~$(1/2,1/2)$ form on~$\widetilde{R} \times \widetilde{R}$ by
\begin{equation}\label{eqn:omegadef}
    \omega_0(z_1',z_2') = 4 \frac{\theta(\int_{z_1'}^{z_2'} \vec{\omega} + e)}{\theta(e) E(z_1',z_2')}.
\end{equation}
Then, for any pairwise distinct~$z_1,\dots,z_K \in U$ approximated by lattice points $z_1^{(\epsilon)},\dots,z_K^{(\epsilon)}$ (which may be on boundary circles, in which case~$h(z_j^{(\epsilon)}) - \mathbb{E}[h(z_j^{(\epsilon)})]= Z_{i_j}$ for some~$i_j = 1,\dots g$) we have
\begin{equation}\label{eqn:joint_moment2}
     \lim_{\epsilon \rightarrow 0} \mathbb{E}\left[  \prod_{j=1}^K (h(z_j^{(\epsilon)}) - \mathbb{E} [h(z_j^{(\epsilon)})]) \right] = \frac{1}{(2\pi \i)^K} \int_{\bar z_1}^{z_1} \cdots \int_{\bar z_K}^{z_K} \det((1-\delta_{i j})\omega_0(z_i', z_j'))_{i,j=1}^K .
   \end{equation}
\end{cor}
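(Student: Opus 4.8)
The plan is to start from the joint moment formula in Proposition~\ref{prop:moments} and substitute the explicit theta-function representations of $F_+$ and $F_-$ from Lemma~\ref{lem:maincomp}, then recognize the resulting sum over sign patterns as a single integral of a determinant on the doubled surface $R$. The formula~\eqref{eqn:joint_moment} expresses the moment as a sum over $\varepsilon_1,\dots,\varepsilon_K \in \{\pm\}$, with signs $\varepsilon_1 \cdots \varepsilon_K$, of iterated integrals of $\det(F_{\varepsilon_i,\varepsilon_j})$; each $F_{\varepsilon_i,\varepsilon_j}$ is one of $F_+$, $F_-$, or their conjugates, integrated against $dz_j$ or $d\bar z_j$ according to the sign $\varepsilon_j$. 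The key observation is that on $R$, the variable $z_j$ with $\varepsilon_j = +1$ lives in $U$ while $\varepsilon_j = -1$ corresponds to the conjugate point $\bar z_j \in \sigma U$; so summing over all $2^K$ sign choices and integrating $dz_j$ over $\gamma_j$ (resp. $d\bar z_j$ over the conjugate path) exactly assembles the integration of a single integrand over the $K$ symmetric-under-conjugation paths from $\bar z_i$ to $z_i$ depicted in Figure~\ref{fig:Udub_integrals}.

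First I would show that the one-form $\omega_0(z_i',z_j')\,$ defined in~\eqref{eqn:omegadef}, when restricted appropriately, reproduces each of the four cases of $F_{\varepsilon_i,\varepsilon_j}$ in~\eqref{eqn:Fepsdef}. Concretely, using~\eqref{eqn:Fpp}, the ratio of theta-and-prime-form prefactors $\frac{E(d_0,z_2)\theta(\int_{d_0}^{z_1}\vec\omega+e)}{E(d_0,z_1)\theta(\int_{d_0}^{z_2}\vec\omega+e)}$ is a product of a function of $z_1$ alone times a function of $z_2$ alone. In the $K\times K$ determinant $\det(F_{\varepsilon_i,\varepsilon_j})$, each such single-variable factor appears exactly once in each row and once in each column (for $i$ in row, $j$ in column), so these ``gauge'' factors pull out of the determinant as a product over rows times a product over columns; since the determinant's off-diagonal structure pairs each row index with a column index exactly once in each permutation, these factors cancel in pairs and disappear from the determinant entirely. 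What remains in each entry is precisely the symmetric core $\frac{2}{\pi}\frac{\theta(\int_{z_i}^{z_j}\vec\omega+e)}{\theta(e)E(z_i,z_j)}$, i.e. $\tfrac{1}{2\pi}\,\omega_0(z_i',z_j')$, and the transformation behavior under $\sigma$ guaranteed by~\eqref{eqn:Fmm} (together with the reflection definitions of $F_0,F_1$) ensures that choosing $z_j' = z_j$ or $z_j' = \bar z_j$ according to $\varepsilon_j$ reproduces the correct conjugated version of the core. The factors of $\frac{2}{\pi}$ in each of the $K$ entries and the $(-\i)^K$ prefactor combine to give the global $\frac{1}{(2\pi\i)^K}$ in~\eqref{eqn:joint_moment2}, with the factor $4 = (2/\pi)\cdot(2\pi)$ absorbed into the normalization of $\omega_0$.

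The main obstacle I expect is bookkeeping the cancellation of the single-variable ``gauge'' factors and tracking signs and conjugations consistently across all $2^K$ terms. The determinant structure is what makes the gauge factors cancel, but verifying this cleanly requires noting that the prefactor $\frac{E(d_0,z_1)^{-1}\theta(\int_{d_0}^{z_1}\vec\omega+e)}{\cdots}$ associated to index $i$ appears as a row-factor and its reciprocal (with $z_1 \leftrightarrow z_2$) as a column-factor, so multiplying row $i$ by $c_i$ and column $j$ by $c_j^{-1}$ leaves $\det$ invariant; one must confirm these factors genuinely appear in this conjugate-reciprocal pattern across all four cases of~\eqref{eqn:Fepsdef}, including under conjugation. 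The other delicate point is matching the sum over $\varepsilon_1 \cdots \varepsilon_K$ with explicit $dz_j$ versus $d\bar z_j$ to a single integral over conjugation-symmetric paths: here I would argue that expanding $\int_{\bar z_i}^{z_i}$ of the $(1/2,1/2)$-form $\omega_0$ and using the conjugation symmetry of the paths, together with the antiholomorphic extension of $F_0,F_1$ to $\sigma U$, reproduces exactly the signed sum in~\eqref{eqn:joint_moment}. Once the entrywise identification and the gauge cancellation are established, the passage from the $2^K$-term sum to the single integral over $R$ is essentially a change of variables combined with the reflection symmetry, and the corollary follows.
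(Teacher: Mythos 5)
Your proposal is correct and takes essentially the same route as the paper's (very terse) proof: substitute Lemma~\ref{lem:maincomp} into Proposition~\ref{prop:moments}, cancel the single-variable gauge factor of the form $g(z_i)/g(z_j)$ by row/column scaling invariance of the determinant, and reassemble the $2^K$ sign terms as a single integral over the conjugation-symmetric paths on $R$, with the prefactor arithmetic $(-\i)^K(2\pi)^{-K} = (2\pi\i)^{-K}$ exactly as you state. The one point you flag but leave unresolved --- the extra minus sign in \eqref{eqn:Fmm} --- is closed in the paper by a one-line parity argument: since a permutation is a bijection, each term in the determinant expansion contains equally many entries of type $F_-$ (pattern $(-,+)$) and of type $\overline{F_-}$ (pattern $(+,-)$), hence an even number of such minus signs in total, so they cancel.
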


\begin{proof}
We use Lemma~\ref{lem:maincomp} and Proposition~\ref{prop:moments}.

We must ensure that the extra sign in the right hand side of~\eqref{eqn:Fmm} does not contribute. However in each summand in~\eqref{eqn:joint_moment}, for each term in the expansion of the determinant (as a sum over permutations) there are an even number of appearances of~$F_-$, so the signs cancel out.

Moreover, the factor of the form~$\frac{g(z_1)}{g(z_2)}$ on the right hand side of~\eqref{eqn:Fpp} cancels out in determinants, leading to the formula above.
\end{proof}

\subsection{Completing the proof}

Using the formula~\eqref{eqn:Fpp} in the expression~\eqref{eqn:joint_moment2}, we can now prove Theorems \ref{thm:intro_main1} and~\ref{thm:intro_main2}. Both theorems follows from results of~\cite{BN25}; for completeness, we outline the proof below, and refer the reader to Lemma 4.9, Proposition 4.8 together with Theorem 4.1, and Proposition 4.12 in that work for details in parts 1,2, and 3 of the proof below, respectively; for these computations, the surface~$\mathcal{R}$ there plays the role of~$R$ here, and~$\mathcal{R}_0$ there plays the role of~$U$ here.

Before beginning, we recall that the \emph{classical cumulant}~$\kappa[X_1,\dots, X_n]$ associated to a collection of random variables ~$X_{1},\dots, X_{n}$ (possibly with duplicates) is defined inductively by the relations
\begin{equation}\label{eqn:cumsum}
    \mathbb{E}[X_1 \cdots X_n] = \sum_{\pi} \prod_{B \in \pi} \kappa[X_i ; i \in B]
\end{equation}
where the summation is over partitions of indices~$\{1,\dots, n\}$ and the product is over blocks~$B$ in the partition~$\pi$.

\begin{proof}[proof of Theorem~\ref{thm:intro_main1}]
We will break up the proof outline into steps.
\begin{enumerate}
    \item We derive an expression for the joint cumulants of~$h(z_i^{(\epsilon)})$, and work with these rather than centered moments. The (limiting) cumulants have the form 
    \begin{equation}\label{eqn:cumeq}
    \kappa[h(z_i^{(\epsilon)}), i =1,\dots,K] = \frac{(-1)^{K+1}}{(2\pi  \i)^K}\int_{\bar z_1}^{z_1} \cdots \int_{\bar z_K}^{z_K} \sum_{ \text{$K$-cycles } \sigma} \prod_{j=1}^K \omega_0(z_j',z_{\sigma(j)}') + o(1).
    \end{equation}
    The sum is over permutations of~$\{1,\dots, K\}$ which consist of a single~$K$-cycle. This is boils down to a combinatorial fact ``under the integral'': The algebraic relationship between the sum over~$K$-cycles and the determinant is the same as the algebraic relationship~\eqref{eqn:cumsum} between cumulants and moments. Since any~$z_i$ may be on the boundary, this also gives joint cumulants between values of~$\tilde h (z_i^{(\epsilon)})$ and any collection of~$(Z_1,\dots,Z_g)$ (in this case the integrations corresponding to copies of~$Z_i$ will be over~$B$ cycles~$B_i$).

\item Recall harmonic functions~$f_i : U\rightarrow \mathbb{R}$,~$i=1,\dots,g$, from the Introduction. We may compute~$\lim_{\epsilon \rightarrow 0} \kappa[\tilde h(z_1^{(\epsilon)}), \cdots, \tilde h(z_K^{(\epsilon)})]$ by expanding out the product using multilinearity of cumulants together with~\eqref{eqn:cumeq}. The result is
\begin{equation}\label{eqn:joint_cum}
    \frac{(-1)^{K+1}}{(2\pi  \i)^K}\left(\int_{\bar z_1}^{z_1} -\sum_{j=1}^g f_j(z_1)\int_{B_j}\right)  \cdots \left(\int_{\bar z_K}^{z_K} -\sum_{j=1}^g f_j(z_k)\int_{B_j}\right)  \sum_{ \text{$K$-cycles } \sigma} \prod_{j=1}^K \omega_0(z_j',z_{\sigma(j)}')
\end{equation}
where the product of sums of integration symbols should be ``expanded out''.

We first analyze~\eqref{eqn:joint_cum} for~$K=2$, which is the second cumulant, or the second centered moment; this may be done verbatim as in \cite[Proposition 4.8]{BN25}. Expanding the expression into a sum of integrals, we see it is harmonic as a function of~$z_1$ and satisfies Dirichlet boundary conditions, and by analyzing the singularity as~$z_1 \rightarrow z_2$ (coming from the singularity of~$\omega_0$), we can see that it agrees with~$16/\pi$ times the Green's function~$g_U(z_1, z_2)$.

Then, we analyze higher cumulants, i.e.~\eqref{eqn:joint_cum} when~$K > 2$. When~$K>2$, the integrand is holomorphic in all variables, i.e. it has no poles. We can see this by observing that swapping~$z_1$ and~$z_2$ leaves the integrand invariant, which means that a simple pole (which is the only possible type of singularity) as~$z_1 \rightarrow z_2$ is impossible. Moreover, the expression vanishes as any variable~$z_i$ converges to~$\partial U$. Therefore, the higher cumulants are harmonic in~$z_1$ for any fixed distinct~$z_2,\dots, z_K$, and have zero boundary values, and thus vanish identically. The vanishing of higher cumulants implies the Wick rule for higher moments. This proves that~$\tilde{h}$ converges in the sense of moments to the Gaussian free field. 

\item To show that~$\tilde h$ and~$h_1,\dots, h_g$ are independent, we show
$$\kappa[\tilde h(z_1^{(\epsilon)}), \cdots, \tilde h(z_K^{(\epsilon)}), Z_1,\dots,Z_1, Z_2,\dots, \cdots, \dots, Z_g,\dots, Z_g] \rightarrow 0$$
where above there are any number~$n_i \geq 0$ of copies of each~$Z_i$. We must again analyze an expression like~\eqref{eqn:joint_cum} but now with, say,~$m=\sum_{i=1}^g n_i$ extra integrals over various~$B$ cycles. Similar arguments to the ones in the final paragraph of the last step lead to the vanishing of such a joint cumulant, which implies asymptotic independence (in the sense of moments). \qedhere
\end{enumerate}
\end{proof}
Next, we complete the proof of Theorem~\ref{thm:intro_main2}. We again give a very brief outline, since as we explain below, the proof consists of computations which can be taken word for word from~\cite{BN25}. 
\begin{proof}[Proof of Theorem~\ref{thm:intro_main2}]
We must show moments of~$(\frac{1}{4} Z_1,\dots, \frac{1}{4} Z_g)$ asymptotically match those of $(X_1-\mathbb{E}[X_1],\dots, X_g-\mathbb{E}[X_g])$, where~$(X_1,\dots, X_g)$ is a discrete Gaussian distribution as in the theorem statement. It suffices to match the joint cumulants of size~$\geq 2$. Denote~$\kappa_{n_1,\dots,n_g}$ as the (leading order asymptotic of the) joint cumulant of the collection of random variables consisting of~$n_1$ copies of~$\frac{1}{4}Z_1$,~$n_2$ copies of~$\frac{1}{4}Z_2$, and so on. To compute this, we take all variables~$z_i$ in~\eqref{eqn:cumeq} to the inner boundaries, so that all integrations are over~$B$ cycles, leading to the formula 
\begin{equation}\label{eqn:joint_Zcums}
\kappa_{n_1,\dots,n_g} = \frac{(-1)^{K+1}}{(2\pi \i)^K 4^K}\int_{B_1} \cdots \int_{B_1} \cdots \cdots \int_{B_g} \cdots \int_{B_g} \sum_{\text{$K$-cycles } \sigma} \prod_{j=1}^K  \omega_0(z_j',z_{\sigma(j)}')
\end{equation}
where there are~$n_i \geq 0$ integrations over the cycle~$B_i$.

The proof of Theorem 4.2 of~\cite{BN25} computes a formula in terms of theta functions for the expressions on the right hand side of~\eqref{eqn:joint_Zcums}, where~$\omega_0$ is of the form~\eqref{eqn:omegadef} for any~$e \in \mathbb{R}^g$  (up to the extra prefactor of~$4$, which is accounted for by the prefactor of~$\frac{1}{4^K}$). The theorem gives a formula for such expressions in terms of the theta function associated to~$R$: For~$K = n_1+\cdots+n_g \geq 2$, the right hand side of~\eqref{eqn:joint_Zcums} is given by 
\begin{equation}\label{eqn:theta_form}
    (2\pi \i)^K \kappa_{n_1,\dots, n_g} = \partial_{z_1}^{n_1} \cdots \partial_{z_g}^{n_g}\left( \log\theta(B z + e)  + \frac{1}{2}(2\pi \i) z\cdot B z \right)|_{z_1=\cdots=z_g = 0}.
\end{equation}
We outline the idea behind this computation. The proof is inductive. The~$K=2$ case can be computed directly using the identity in Equation (39) of~\cite{Fay73}. For the induction step, we will analyze the integrand. Making the~$e$ dependence explicit, denote 
$$
\Omega_K(z_1',\dots, z_K'; e) \coloneqq \frac{(-1)^{K+1}}{4^K} \sum_{\text{$K$-cycles } \sigma} \prod_{j=1}^K \omega_0(z_j',z_{\sigma(j)}').
$$
If $K \geq 2$, the quantities on the right hand side of~\eqref{eqn:theta_form} satisfy the property that passing from~$n_i \rightarrow n_i+1$ leads to another differentiation in~$z_i$ before setting~$z_1=\dots=z_g=0$, which is equivalent to applying the linear combination~$\sum_{j=1}^g B_{i j} \partial_{e_j}$ of derivatives in the variables~$(e_1,\dots,e_g)$. By induction, it suffices to show that the right hand side of~\eqref{eqn:joint_Zcums} satisfies the same property. The proof uses an identity of Fay (specifically, Equation (38) in Proposition 2.10) and some computations to show that~$\Omega_K(z_1',\dots, z_K'; e) = \sum_{i=1}^g \partial_{e_i} \Omega_{K-1}(z_2',\dots, z_K'; e) \omega_i(z_1')$, which implies the property we want for~\eqref{eqn:joint_Zcums}; recall~$\{\omega_i\}_{i=1}^g$ are a basis of holomorphic one forms, and they satisfy~$B_{ij} = \int_{B_j} \omega_i$.

Then, the \emph{modular transformation} implies that the these expressions~\eqref{eqn:theta_form} for limiting cumulants match the cumulants of a discrete Gaussian, see Corollary 4.15 in~\cite{BN25}. The resulting discrete Gaussian has the same distribution as in Theorem~\ref{thm:intro_main2}, in particular the shift~$e$ is the same, except the parameter~$\tau$ (defined by~\eqref{eqn:taudef}) is replaced by~$\i B^{-1}$. (We remark that in the notation of~\cite{BN25}, the \emph{scale matrix} corresponding to the distribution \eqref{eqn:disc_gauss} is instead defined to be~$\i \tau$, since the scale matrix there is normalized to be pure imaginary with positive definite imaginary part). However, the computations in Section 4.5 of~\cite{BN25}, especially Equation (98) there, imply that~$\i B^{-1} = \tau$, so the scale matrix also matches the one in Theorem~\ref{thm:intro_main2}. Finally, the discrete Gaussian is uniquely determined by its moments, so convergence of moments implies convergence in distribution.

\end{proof}

\bibliographystyle{alpha}
\bibliography{bibliotek}

\end{document}